\newtheorem{Theorem}{Theorem}
\newtheorem{Lemma}[Theorem]{Lemma}
\newtheorem{Example}[Theorem]{Example}
\newtheorem{Remark}[Theorem]{Remark}
\newtheorem{Proposition}[Theorem]{Proposition}
\newcommand{\N}{\mathbb{N}}
\newcommand{\Z}{\mathbb{Z}}
\newcommand{\mc}[1]{\mathcal{#1}} % short for mathcal
\newcommand{\mt}[1]{\text{#1}}
\begin{document}
\title{Omitting parentheses from the cyclic notation}
\author[1]{Mahir Bilen Can}
\author[2]{Yonah Cherniavsky}

\affil[1]{Department of Mathematics, Tulane University, New Orleans, USA; mcan@tulane.edu}
\affil[2]{Department of Computer Science and Mathematics, Ariel University, Israel; yonahch@ariel.ac.il}

\date{August 23, 2013}
\maketitle

\begin{abstract}
The purpose of this article is to initiate a combinatorial study of the Bruhat-Chevalley ordering
on certain sets of permutations obtained by omitting the parentheses from their standard cyclic notation.
In particular, we show that these sets form a bounded, graded, unimodal, rank-symmetric
and EL-shellable posets. Moreover, we determine the homotopy types of the associated order complexes.
\\ \\
\textbf{Keywords:} Bruhat order, graded posets, unimodality, EL-shellability.
\end{abstract}

\section{Introduction}

Let $n$ be a positive integer, and let $S_n$ denote the symmetric group of permutations on the set $[n]:=\{1,\dots, n\}$.
A {\em fixed-point-free involution} is a permutation $\pi \in S_n$ such that $\pi \circ \pi = id$ and $\pi(i) \neq i$ for all $i\in [n]$.
In their interesting paper~\cite{DS}, Deodhar and Srinivasan define and study
an analog of the Bruhat-Chevalley ordering on the set of all fixed-point-free involutions of $S_{2n}$.
In~\cite{UV}, Upperman and Vinroot investigate the natural extension of this partial ordering on the set of products of $n$
disjoint cycles of length $m$ in $S_{mn}$.
In this note we present a construction generalizing both of these works.
We achieve this by studying a self-map on $S_n$, which is in some sense very classical.

The {\em standard cyclic form} of a permutation $\pi \in S_n$ is the representation of $\pi$ as a product of disjoint cycles
$$
\pi=\left(a_{1,1},\dots,a_{k_1,1}\right)\left(a_{1,2},\dots,a_{k_2,2}\right)\cdots\left(a_{1,m},\dots,a_{k_m,m}\right),
$$
where $a_{1,1}<a_{1,2}<\cdots<a_{1,m}$ and $a_{1,j}<a_{i,j}$ for every $1\leqslant j\leqslant m$ and $2\leqslant i\leqslant k_j$.
Here, contrary to the commonly used convention, we do not suppress cycles of length one from our notation.

Recall that the {\em one-line notation} for $x \in S_n$ is defined by
$x=\left[a_1, a_2,\dots, a_n\right]$, where $x(j)=a_j$ for $1\leqslant j\leqslant n$.
We define the mapping $\Omega: S_n \rightarrow S_n$ as follows:
write $x\in S_n$ in the standard form and then omit the parentheses.
We view the resulting word as a permutation of $S_n$ written in one-line notation.
For example, let $x=(1,2)(3)(4,5)$ be a permutation given in its standard cyclic form.
Then $\Omega(x)=[1,2,3,4,5]$, which is the identity permutation in $S_5$.

It is clear that $\Omega$ is not injective, however, there are interesting subsets of $S_n$
on which its restriction is one-to-one.
Recall that a {\em composition} of $n$ is a sequence $\lambda = (\lambda_1,\dots, \lambda_k)$
of positive integers such that $\sum \lambda_i = n$. In this case, it is customary to write $\lambda \vDash n$.
We define the {\em composition type} of a permutation $x \in S_n$ to be the composition obtained from
the standard cyclic form of $x$ by considering the lengths of its cycles.
For example, if $x=(1,2)(3)(4,5)\in S_5$, then its composition type is $\lambda=(2,1,2)$.
We denote by $A_\lambda \subset S_n$ the set of all permutations of composition type
$\lambda \vDash n$.

In the works of Deodhar and Srinivasan~\cite{DS}, and Upperman and Vinroot~\cite{UV}
the authors use a certain restriction of our map $\Omega$ composed with the inverse map $w \mapsto w^{-1}$.
Both of these papers use $\phi$ to denote their corresponding map.
In~\cite{DS}, $\phi$ operates on the set of fixed-point-free involutions
and in ~\cite{UV} $\phi$ operates on the set of products of disjoint $m$-cycles.
%Also, in their works, the authors consider only those permutations of composition type
%$\lambda= (m,\dots,m)$ (with $m=2$ in~\cite{DS}, and for arbitrary $m$ in~\cite{UV}).
Here, we apply $\Omega$ to arbitrary $A_\lambda$ for $\lambda \vDash n$,
and investigate the ``Bruhat-Chevalley ordering'' on the resulting set of permutations, which we denote by
$C_\lambda:=\Omega\left(A_\lambda\right)$.
To continue, let us recall the definition of this important partial ordering.

The {\em inversion number} of a permutation $x=[a_1,\dots, a_n] \in S_n$ is the cardinality of the set of inversions
\begin{equation}\label{E:invpermutation}
inv(x) := |\{ (i,j):\ 1\leq i < j \leq n,\ x(i)>x(j) \}|.
\end{equation}
The {\em Bruhat-Chevalley ordering} on $S_n$ is the transitive closure of the following covering relations:
$x= [a_1,\dots, a_n]$ is covered by $y=[b_1,\dots,b_n]$, if  $inv(y)=inv(x)+1$ and
\begin{enumerate}
\item $a_k=b_k$ for $k\in \{ 1,\dots,\widehat{i},\dots,\widehat{j},\dots,n\}$ (hat means omit those numbers),
\item $a_i=b_j$, $a_j=b_i$, and $a_i<a_j$.
\end{enumerate}

We are now ready to give a brief overview of our paper and state our main results.
In the next section, we introduce more notation and provide preliminary results for our proofs.

Our first main result, which prove in Section \ref{S:three} is about the gradedness of $C_\lambda$'s.
Recall that a finite poset is called {\em graded}, if all maximal chains are of the same length.

\vspace{.25cm}
\noindent
\textbf{Theorem A.}
%\textbf{Theorem~\ref{ELS}.}
Given a composition $\lambda \vDash n$, with respect to the restriction of the Bruhat-Chevalley
ordering the set $C_\lambda$ is a graded poset with a minimum and a maximum.
\vspace{.25cm}

Let $q$ be a variable, and for a subset $S\subset S_n$, let $G_S(q)$ denote its ``length generating function''
$G_S(q) := \sum_{x\in S} q^{inv(x)}$. The $q$-analog of a natural number $n$ is the polynomial
$[n]_q := 1+q +\cdots + q^{n-1}$. Our second main result, which prove in Section \ref{S:four} is

\vspace{.25cm}
\noindent
\textbf{Theorem B.}
%\textbf{Theorem~\ref{UniSym}.}
The length generating function of $C_\lambda$ is of the form
$G_{C_\lambda}(q) =[i_1]_q [i_2]_q \cdots [i_r]_q$ for a suitable sequence
$2 \leq i_1 < \cdots < i_r \leq n-1$ determined by $\lambda$.
\vspace{.25cm}

\noindent
As a consequence, we see that the length generating functions $G_{C_\lambda}(q)$ are palindromic,
and therefore, the posets $C_\lambda$ are unimodal and rank-symmetric.

To state our third main result and its important corollary (our Theorem 4),
we need to recall a definition of the notion of lexicographic shellability of a poset:
A finite graded poset $P$ with a maximum and a minimum element is called {\em EL-shellable}
({\em lexicographically shellable}), if there exists a map $f=f_{\varGamma}: C(P) \rightarrow \varGamma$ between the
set of covering relations $C(P)$ of $P$ into a totally ordered set $\varGamma$ satisfying
\begin{enumerate}
\item in every interval $[x,y] \subseteq P$ of length $k>0$ there exists a unique saturated chain
$\mathfrak{c}:\ x_0=x < x_1 < \cdots < x_{k-1} < x_k=y$ such that the entries of the sequence
\begin{align}\label{JordanHolder}
f(\mathfrak{c}) = (f(x_0,x_1), f(x_1,x_2), \dots , f(x_{k-1},x_k))
\end{align}
is weakly increasing.
\item The sequence $f(\mathfrak{c})$ of the unique chain
$\mathfrak{c}$ from (1) is the smallest among all sequences of the form
$(f(x_0,x_1'), f(x_1',x_2'), \dots , f(x_{k-1}',x_k))$, where $x_0 < x_1' < \cdots < x_{k-1}' < x_k$.
\end{enumerate}

For Bruhat-Chevalley ordering, in an increasing order of generality, the articles~\cite{E},~\cite{P}, and~\cite{BW} show that
$S_n$ is a lexicographically shellable poset. In Section~\ref{S:three} we prove that

\vspace{.25cm}
\noindent
\textbf{Theorem C.}
%\textbf{Theorem~\ref{ELS}.}
For all compositions $\lambda \vDash n$, the posets $C_\lambda$ are EL-shellable.
\vspace{.25cm}

%In fact, the fact that $C_\lambda$ graded is proved together with its EL-shellability in Theorem~\ref{ELS}.

The {\em order complex} $\Delta(P)$ of a poset $P$ is the abstract simplical complex consisting
of all chains from $P$. An important consequence of EL-shellability is that the associated order complex
has the homotopy type of wedge of spheres or balls.
For example, when $P=S_n$ (with respect to Bruhat-Chevalley ordering), the order complex of $P$ triangulates
a sphere of dimension $n(n-1)/2$.
In our final main result, which prove in Section \ref{S:five}, we obtain

\vspace{.25cm}
\noindent
\textbf{Theorem D.}
%\textbf{Theorem~\ref{HomType}.}
Let $\lambda\vDash n$ be a composition.
\begin{enumerate}
\item If $\lambda=(n)$ or $\lambda=(n-1,1)$, then the poset $C_\lambda$ is isomorphic to $S_{n-1}$.
In this case it is just a copy of $S_{n-1}$ embedded into $S_n$.
Similarly, if $\lambda_1=\lambda_2=\cdots=\lambda_{k-1}=1$ and $\lambda_k=n-k+1$ or $\lambda_1=\lambda_2=\cdots=\lambda_{k-2}=1$, $\lambda_{k-1}=n-k$, $\lambda_k=1$ (like $(1,1,1,4,1)$), then the poset $C_\lambda$ is isomorphic to $S_{n-k}$.
Therefore, in these cases, $\Delta(C_\lambda)$ triangulates a sphere.
\item In all other cases the order complex of $C_\lambda$ triangulates a ball.
\end{enumerate}
\vspace{.25cm}

We finish our paper with final comments and future questions in Section \ref{S:six}.

\vspace{.5cm}
\textbf{Acknowledgements.} The first author is partially supported by the Louisiana Board of Regents Research Development grant.

\section{Preliminaries}\label{S:preliminaries}

Recall that in a poset $P$, an element $y$ is said to {\em cover} another element $x$, if $x < y$ and if $x \leq z \leq y$ for some $z\in P$,
then either $z=x$ or $z=y$. In this case, we write $x \leftarrow y$. Given $P$, we denote by $C(P)$ the set of all covering relations of $P$.

A poset $P$ is called {\it bounded} if it has the unique minimal element and the unique maximal element which are usually denoted $\hat{0}$ and $\hat{1}$.

An (increasing) {\em chain} in $P$ is a sequence of distinct elements such that $x=x_1 < x_2 < \cdots < x_{n-1} < x_n = y$.
A chain in a poset $P$ is called {\em saturated}, if it is of the form $x=x_1 \leftarrow x_2 \leftarrow \cdots \leftarrow x_{n-1} \leftarrow x_n = y$.
A saturated chain in an interval $[x,y]$ is called {\em maximal}, if the end points of the chain are $x$ and $y$.
Recall also that a poset is called {\em graded} if all maximal chains between any two comparable elements $x \leq y$
have the same length. This amounts to the existence of an integer valued function $\ell_P : P\rightarrow \N$ satisfying
\begin{enumerate}
\item $\ell_P (\hat{0}) = 0$,
\item $\ell_P (y) = \ell_P(x) +1$ whenever $y$ covers $x$ in $P$.
\end{enumerate}
$\ell_P$ is called the {\em length function} of $P$.
In this case, the length of the interval $[\hat{0},\hat{1}]=P$ is called the {\em length} of the poset $P$.
For $i\in \N$, the {\em $i$-th level} of a graded poset $P$ is defined to be the subset $S\subset P$
consisting of elements of length $i$.
Thus, if $p_i$ denotes the number of elements of the $i$-th level of $P$, then
the length-generating function of $P$ is equal to $G_P(q) = \sum_{i\geq 0} p_i q^i$.

\vspace{.5cm}

We recalled the definition of the notion of EL-shellability in the previous section.
There is an elementary but a useful criterion to see if a poset is EL-shellable or not:
\begin{Lemma}[Proposition 3.1~\cite{DS}]\label{L1}
Let $P$ be a finite graded poset with the smallest and the largest elements, denoted by $\hat{0}$ and $\hat{1}$, respectively.
(Assume $\hat{0} \neq \hat{1}$.) Let $g: C(P) \rightarrow \varGamma$ be an EL-labeling of $P$.
Let $Q\subseteq P$ contain $\hat{0}$ and also a maximal element $z \neq \hat{0}$ (in the induced order).
Assume that $Q$ satisfies the following property: For all $x< y$, the unique rising chain from $x$ to $y$ in $P$ lies in $Q$.
Then $Q$ (with the induced order) is a graded poset with the same rank function as $P$ and $g$ restricted to $C(Q)$
is an EL-labeling for $Q$.
\end{Lemma}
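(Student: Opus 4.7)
My plan is to leverage the rising-chain hypothesis to show that $Q$ inherits its combinatorial structure from $P$ so thoroughly that gradedness and EL-shellability of $Q$ will reduce to invoking the corresponding properties of $P$. The key observation I would establish first is the inclusion $C(Q) \subseteq C(P)$: if $y$ covers $x$ in $Q$, I would consider the unique $g$-rising saturated chain $x = x_0 \lessdot x_1 \lessdot \cdots \lessdot x_k = y$ in $[x,y] \subseteq P$. By hypothesis every $x_i$ lies in $Q$; since $y$ covers $x$ in $Q$, no $x_i$ with $0 < i < k$ can exist, forcing $k = 1$, so $y$ also covers $x$ in $P$.

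From this observation it follows at once that any saturated chain in $Q$ is saturated in $P$. Since $P$ is graded, all saturated chains of any interval $[x,y] \subseteq Q$ must share the common length $\ell_P(y) - \ell_P(x)$. This will simultaneously prove that $Q$ is graded and identify its rank function as $\ell_Q = \ell_P\big|_Q$, which settles the first half of the lemma.

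To verify EL-shellability of $Q$ under the restricted labeling $g|_{C(Q)}$, I would fix an arbitrary interval $[x,y] \subseteq Q$ and check three points. First, the unique $g$-rising chain $\mathfrak{c}$ of $[x,y] \subseteq P$ lies in $Q$ by hypothesis and is saturated in $Q$ by the previous paragraph, so it serves as a $g$-rising saturated chain in $[x,y] \subseteq Q$. Second, any other $g$-rising saturated chain in $Q$ would also be a $g$-rising saturated chain in $P$, contradicting uniqueness there. Third, for the lex-minimality axiom, the set of saturated chains from $x$ to $y$ in $Q$ sits inside the corresponding set in $P$, so the $P$-lex-minimum $\mathfrak{c}$ is a fortiori lex-minimal in $Q$. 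The only substantive step in the entire argument is the initial observation $C(Q) \subseteq C(P)$; once that is in hand, everything else is pure bookkeeping on chains of $P$ that happen to avoid $P \setminus Q$.
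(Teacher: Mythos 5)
Your proof is correct. Note that the paper gives no proof of this lemma at all --- it is quoted as Proposition 3.1 of Deodhar and Srinivasan~\cite{DS} --- so there is nothing internal to compare against; your argument, hinging on the observation that $C(Q)\subseteq C(P)$ (so saturated chains of $Q$ are saturated in $P$, giving gradedness with $\ell_Q=\ell_P|_Q$, and existence, uniqueness and lex-minimality of rising chains in $Q$ all transfer from $P$), is exactly the standard one and is complete.
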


\vspace{.5cm}

To check the EL-shellability of the posets of our paper,
we need to have a concrete way for comparing given two permutations in the Bruhat-Chevalley ordering:
For an integer valued vector $a= [a_1,\dots , a_n] \in \Z^n$, let
$\widetilde{a} = [a_{\alpha_1}, \dots , a_{\alpha_n}]$  be the
rearrangement of the entries $a_1,\dots,a_n$ of $a$ in a non-increasing fashion;
\begin{equation*}
a_{\alpha_1} \geq a_{\alpha_2} \geq \cdots \geq a_{\alpha_n}.
\end{equation*}
The \textit{containment ordering}, ``$\leq_c$,'' on $\Z^n$ is then defined by
\begin{equation*}
a=[a_1,\dots ,a_n] \leq_c b=[b_1,\dots ,b_n] \iff a_{\alpha_j} \leq
b_{\alpha_j}\ \text{for all}\  j=1,\dots,n,
\end{equation*}
where $\widetilde{a} = [a_{\alpha_1},\dots ,a_{\alpha_n}]$, and
$\widetilde{b} = [b_{\alpha_1},\dots,b_{\alpha_n}]$.
\begin{Example}
Let $x=[4,5,0,3,1]$, and let $y=[4,2,5,5,1]$. Then $x \leq_c y$, because
$\widetilde{x}=[5, 4, 3, 1, 0]$ and $\widetilde{y}= [5, 5, 4, 2, 1]$.
\end{Example}

For $k\in [n]$, the \textit{$k$-th truncation} $a[k]$ of $a=[a_1,\dots ,a_n]$ is defined to be $a[k]:=[a_1, a_2,\dots ,a_k]$.
A proof of the following lemma can be found in \cite{CR}.
\begin{Lemma}\label{L3}
Let $v=[v_1,\dots , v_n]$ and $w=[w_1,\dots , w_n]$ be two permutations from $S_n$.
Then $v\leq w$ (in Bruhat-Chevalley ordering) if and only if
\begin{equation*}
 \widetilde{v[k]} \leq_c \widetilde{w[k]}\ \text{for all}\ k=1,\dots,n.
\end{equation*}
\end{Lemma}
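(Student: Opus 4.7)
The plan is to establish the two implications of Lemma~\ref{L3} separately; the forward implication is straightforward, while the converse is the technical heart of the argument.

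For the forward direction, both the Bruhat order and the relation ``$\widetilde{v[k]} \leq_c \widetilde{w[k]}$ for all $k$'' are transitive, so it suffices to verify the implication when $v \lessdot w$ is a covering relation. In that situation, $w$ is obtained from $v$ by swapping entries in positions $i < j$ with $v_i < v_j$ and no $\ell$ strictly between $i$ and $j$ satisfying $v_i < v_\ell < v_j$. I would split into three cases according to $k$: for $k < i$ the sequences $v[k]$ and $w[k]$ are identical; for $k \geq j$ the multisets $\{v_1,\dots,v_k\}$ and $\{w_1,\dots,w_k\}$ coincide; and for $i \leq k < j$ the latter multiset differs from the former only by replacing $v_i$ with the strictly larger element $v_j$. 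In each case $\widetilde{v[k]} \leq_c \widetilde{w[k]}$ is immediate.

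For the backward direction, I would induct on $N = \mathrm{inv}(w) - \mathrm{inv}(v)$. The hypothesis forces $N \geq 0$, and in the base case $N = 0$ one argues that the coincidence of the sorted prefixes at every $k$, combined with the comparison of successive prefixes, gives $v = w$. For $N > 0$, the plan is to produce a covering relation $v \lessdot v'$ such that the tableau condition between $v'$ and $w$ still holds. Once that is in hand, the inductive hypothesis yields $v' \leq w$, and hence $v < v' \leq w$ as desired.

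Constructing $v'$ is the main obstacle. The natural starting point is to let $i$ be the smallest index where $v_i \neq w_i$; the tableau condition at $k = i$ then forces $v_i < w_i$, and $w_i$ appears at some later position in $v$, so candidate transpositions exist. However, a naive choice of $j$ (such as the smallest $j > i$ with $v_j > v_i$) can fail to preserve the tableau condition at intermediate $k$, as small examples reveal. One must choose the pair $(i,j)$ carefully, so that the swap is a Bruhat cover (no $\ell \in (i,j)$ with $v_i < v_\ell < v_j$) and so that the insertion of the larger element $v_j$ into each sorted prefix $\widetilde{v[k]}$ for $i \leq k < j$ does not violate the componentwise inequality against $\widetilde{w[k]}$. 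I expect this verification to rest on a ``slack'' argument: the strict gap $v_i < w_i$ at position $i$ provides one unit of slack that must be shown to persist throughout the affected range, and the correct choice of $j$ is essentially the one that preserves the most slack while still producing a cover.
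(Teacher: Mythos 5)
The paper does not prove Lemma~\ref{L3} at all; it cites \cite{CR} for a proof (the statement is the classical Ehresmann/tableau criterion for the Bruhat--Chevalley order, and the cited source proves a rook-monoid generalization). So the comparison here is really between your sketch and the standard argument. Your forward direction is complete and correct: reducing to a covering relation and splitting into $k<i$, $i\le k<j$, $k\ge j$ is exactly right. The backward direction, however, has a genuine gap precisely at the point you flag yourself. The entire content of the lemma is the construction of a cover $v\lessdot v'$ with $v'$ still satisfying the prefix condition against $w$; saying that one must ``choose $(i,j)$ carefully'' and that you ``expect'' a slack argument to work is naming the problem, not solving it. For the record, the standard resolution is: take $i$ minimal with $v_i\ne w_i$ (so $v_i<w_i$), and take $j>i$ so that $v_j=\min\{v_\ell:\ \ell>i,\ v_i<v_\ell\le w_i\}$; this set is nonempty because the value $w_i$ occurs in $v$ at a position $>i$. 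Minimality of $v_j$ in that window rules out any $\ell\in(i,j)$ with $v_i<v_\ell<v_j$, so the swap is a Bruhat cover, and one then verifies explicitly (not by a vague slack heuristic) that for $i\le k<j$ replacing $v_i$ by $v_j\le w_i$ in the multiset $\{v_1,\dots,v_k\}$ preserves $\widetilde{v[k]}\le_c\widetilde{w[k]}$, using that $v_\ell=w_\ell$ for $\ell<i$ and that $w_i$ is an entry of $w[k]$ for $k\ge i$. Without this choice and this verification written out, the proof is not there.

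A second, smaller problem: your base case $N=0$ is not self-contained. The hypothesis gives only the one-sided inequalities $\widetilde{v[k]}\le_c\widetilde{w[k]}$, not ``coincidence of the sorted prefixes,'' so you cannot read off $v=w$ directly. The clean way to handle it is to note that the inductive-step construction applies whenever $v\ne w$ and produces $v'$ with $\mathrm{inv}(v')=\mathrm{inv}(v)+1$ and $v'\le w$ by induction, which forces $\mathrm{inv}(w)>\mathrm{inv}(v)$; hence $N=0$ together with the prefix condition already implies $v=w$. In other words, the base case also leans on the missing construction, so filling that one gap repairs both issues.
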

\begin{Example}
Let $x=[4,1,2,3,5]$, and let $y=[4,3,2,5,1]$. Then $x \leq y$, because
\begin{eqnarray*}
\widetilde{x[1]}=[4] &\leq_c& \widetilde{y[1]}=[4] ,\\
\widetilde{x[2]}=[4,1] &\leq_c& \widetilde{y[2]}=[4,3], \\
\widetilde{x[3]}=[4,2,1] &\leq_c& \widetilde{y[3]}=[4,3,2], \\
\widetilde{x[4]}=[4,3,2,1] &\leq_c& \widetilde{y[4]}=[5,4,3,2], \\
\widetilde{x[5]}=[5,4,3,2,1] &\leq_c& \widetilde{y[5]}=[5,4,3,2,1].
\end{eqnarray*}
\end{Example}

%%% COMMENTING BEGINS %%%
\begin{comment}
Let $\mt{SL}_m$ denote the special linear group of invertible $m\times m$ matrices with determinant 1,
and let $B\subset \mt{SL}_m$ denote its Borel subgroup of upper triangular elements.
It is well known that the symmetric group of permutation matrices, $S_m$ parametrizes the orbits of
$B$ in the ``flag variety'' $\mt{SL}_m/B$.
For $u\in S_m$, let $\dot{u}$ denote the right coset in $\mt{SL}_m/B$ represented by $u$.
The {\em Bruhat-Chevalley ordering} is defined by
$u \leq_{S_m} v  \iff B\cdot \dot{u} \subseteq \overline{B \cdot \dot{v}}$ for $u,v\in S_m$.

\begin{Remark}
$\Omega$ is of finite order:
Take $\pi \in S_n$. The permutation $\Omega(\pi)$ fixes $1$, so its cyclic standard form is
$$
\Omega(\pi) =(1)(2\cdots)(\cdots)\cdots\,\,\,,
$$
so the one-line notation of $\Omega(\Omega(\pi))$ is $12\cdots$, i.e., $\Omega(\Omega(\pi))$ fixes $1$ and $2$,
so the cyclic standard form of $\Omega(\Omega(\pi))$ is
$$
\Omega(\Omega(\pi))=(1)(2)(3\cdots)\cdots\,\,\,,
$$
and so on. Thus in at most $n$ steps we get the identity.
\end{Remark}
\end{comment}
%%% SOME COMMENTING ENDS %%%

The EL-shellability of the Bruhat-Chevalley order on symmetric group is first proved by Edelman in~\cite{E}.
His EL-labeling is as follows: If $\sigma$ covers $\pi$ and the numbers $i$ and $j$, $i<j$, are interchanged in $\pi$ to get $\sigma$,
then the covering relation (the edge in the Hasse diagram) between $\pi$ and $\sigma$ is labeled by $(i,j)$.
It means that the covering relation between $\pi$ and $\sigma$ is labeled by $(i,j)$ when $\sigma=(i,j)\cdot\pi$,
where by $(i,j)$ we mean the transposition which interchanges $i$ and $j$. Edelman proves in~\cite{E} that this
labeling is indeed an EL-labeling. We use another EL-labeling of the Bruhat poset of $S_n$: the covering relation between
$\pi$ and $\sigma$ is labeled by $(i,j)$ when $\sigma=\pi\cdot (i,j)$. This is also an EL-labeling since the map
$\delta\mapsto\delta^{-1}$ is order-preserving and $\sigma=\pi\cdot (i,j)$ if and only if $\sigma^{-1}=(i,j)\cdot \pi^{-1}$.

\section{Gradedness and EL-shellability}\label{S:three}

Observe that a maximal set on which $\Omega$ is injective is the set of standard cyclic forms
which suit a certain composition of $n$.
Indeed, when we know the composition type we can put the parentheses on the one-line notation of the permutation
in the image of $\Omega$, and thus, we reconstruct its pre-image.

\vspace{.25cm}
\noindent
\textbf{Theorems A. \& C.}
Let $\lambda=(\lambda_1,\dots, \lambda_k)$ be a composition of $n$.
Then $C_\lambda$ is a bounded, graded, EL-shellable subposet of the Bruhat-Chevalley poset of $S_n$.
\vspace{.25cm}

\begin{proof}
The unique minimal element of $C_\lambda$ is the identity permutation $id=(1,2,\dots, n)$.
The unique maximal element $\omega_\lambda$ of $C_\lambda$ is obtained as follows.
Let $\pi_\lambda = T_1 \dots T_k$ be the permutation given in standard cycle form such that
\begin{align}\label{A:max element}
T_i =
\begin{cases}
(i) & \text{if } \lambda_i = 1, \\
 (i,n-\sum_{j=1}^{i-1} \lambda_j - (i-1), n-\sum_{j=1}^{i-1} \lambda_j - (i-2), \dots, n-\sum_{j=1}^{i-2} \lambda_j )
 & \text{if } \lambda_i > 1,
\end{cases}
\end{align}
where $T_i$ is a cycle of length $\lambda_i$.
For example, if $\lambda=(4,2,3,5)$, then
$$
\pi_{(4,2,3,5)} =(1,14,13,12)(2,11)(3,10,9)(4,8,7,6,5).
$$
By using Lemma~\ref{L3}, it is easy to verify that $\Omega(\pi_\lambda)$ is the maximal element of $C_\lambda$.
So, the poset $C_\lambda$ is bounded.

In order to prove that $C_\lambda$ is graded and EL-shellable we use Lemma \ref{L1}.
Thus, we must take two permutations $\delta, \tau\in C_\lambda$ such that $\delta<\tau$ in the Bruhat-Chevalley order of $S_n$,
and show that unique increasing chain (which is lexicographically smallest) between $\delta$ and $\tau$ lies completely inside $C_\lambda$.
Let $C_{\lambda}([\delta,\tau])$ be an interval in $C_\lambda$ with the smallest and the largest elements
$\delta$ and $\tau$, respectively.
In particular,
\begin{align}\label{standard form of delta}
\delta = \Omega(T_1T_2\cdots T_p),
\end{align}
where $T_i$ is a cycle of length $\lambda_i$,
and $1=\text{min} (T_1) < \text{min}(T_2) <\cdots < \text{min}(T_p)$ and furthermore the first entry in $T_i$ is
the smallest among all entries of $T_i$.

We know that $\delta$ is less than or equal to $\tau$ in the Bruhat poset of $S_n$, and denote by $B( [\delta, \tau] )$ the corresponding
interval in $S_n$. Note that, in general, $B([\delta, \tau])$ is not equal to $C_\lambda ([\delta, \tau])$.

{\em Claim:} Let $\mathfrak{c}: \delta = x_0 < x = x_1 < \dots < x_m = \tau$ denote the lexicographically smallest increasing chain in
$B([\delta, \tau])$. Then $\mathfrak{c}$ is contained in $C_\lambda ([\delta, \tau])$.

{\em Proof of the Claim:} We use induction on the length of the chain $\mathfrak{c}$. It is clear that if $m=1$, then there is nothing to prove.
So, we assume that $m > 1$. Now, by using the induction step, it suffices to show that $x=x_1$ of $\mathfrak{c}$ lies in $C_\lambda ([\delta,\tau])$.

Note that the label of the covering relation $x\rightarrow \delta$ is minimal among all coverings of $\delta$ in $B([\delta,\tau])$.
Let $\sigma = (k,s)$ denote the corresponding transposition that gives the minimal label. Hence, $x= \delta \cdot \sigma$ and $\ell(x) = \ell(\delta) +1$.
For convenience let us use one-line notation $\delta_1\delta_2\dots \delta_k \dots \delta_s \dots \delta_n$ to denote $\delta$.
Then $x=\delta_1\dots \delta_s \dots \delta_k\dots \delta_n$.
Also, it follows from the definition of the Bruhat-Chevalley ordering and of the EL-labeling (see Section \ref{S:preliminaries}) that
\begin{itemize}
\item[(i)] $\delta_k < \delta_s$,
\item[(ii)] for $k< i < s$, either $\delta_i < \delta_k$ or $\delta_i > \delta_s$,
\item[(iii)] $k$ is the smallest index with $\tau_k \neq \delta_k$.
\end{itemize}

Let $T_1\dots T_p$ be as in (\ref{standard form of delta}) and
let $1\leq j \leq l \leq p$ be such that $\delta_k \in \text{support}(T_j)$ and $\delta_s \in \text{support}(T_l)$.

We first assume that $j<l$.
Let $T_j'$ denote the cycle obtained from $T_j$ by replacing $\delta_k$ by $\delta_s$, and
let $T_l'$ denote the cycle obtained from $T_l$ by replacing $\delta_s$ by $\delta_k$.
Clearly, if we can show that $T_1\dots T_j' \dots T_l' \dots T_p$ is in standard cycle form, then the proof follows in this case, because
$\Omega(T_1\dots T_j' \dots T_l' \dots T_p) = x$.
Let $T_q$ be a cycle such that $j \leq q \leq l$. We do our analysis in two steps.
First, we consider when $j< q \leq l$.
Then (ii) implies that $\text{min}(T_q) < \delta_s$. In this case, by using (ii) once more,
we see that $\text{min}(T_q) < \delta_k$. Therefore, interchanging $\delta_k$ and $\delta_s$ does not alter the composition type,
hence we are done in this case.
The second case, when $j=q$ splits into two sub-cases; either
$\text{min}(T_j)= \delta_k$, or $\text{min}(T_j) < \delta_k$. The latter sub-case is the same as our first step.
For the former sub-case, by using (ii) one more time, we see that if $j \leq q' \leq l$, then any entry of $T_{q'}$,
except $\delta_k$, satisfies $\delta_s \leq \text{min}(T_{q'})$. Therefore, interchanging $\delta_s$ and $\delta_k$
does not alter the composition type in this sub-case, neither.

Finally, we assume that $j=l$. If $\delta_k$ is not the first entry of $T_j$, then we denote by $T_j'$ the
cycle that is obtained from $T_j$ by interchanging $\delta_k$ and $\delta_s$. It is clear that the permutation
$T_1\dots T_j'\dots T_p$ is in standard form and applying $\Omega$ to it gives us $x$.
We finish our proof by eliminating the possibility $\delta_k = \text{min}(T_j)$.
To this end, let $\pi = R_1\dots R_j \dots R_n$ denote a permutation given in standard cycle notation such that $\Omega(\pi)=\tau$.
Hence, $R_i$ is a cycle of length $\lambda_i$,
and $1=\text{min} (R_1) < \text{min}(R_2) <\cdots < \text{min}(R_p)$ and furthermore the first entry in $R_i$ is
the smallest among all entries of $R_i$. In addition, by (iii) we know that $T_i = R_i$ for $i < j$.
Now, because $\delta_k$ is the smallest element of $T_j$, it cannot occur in $R_j$ for $\text{min}(R_j)=\tau_k$.
If $R_q$, $q>j$ is the cycle of $\pi$ containing $\delta_k$, then $\text{min}(R_q) < \tau_k$. But this contradicts with the fact that
$\text{min}(R_j)=\tau_k < \text{min}(R_q)$, hence the proof is complete.
\end{proof}

\begin{Proposition}
For $\lambda = (\lambda_1,\dots, \lambda_k) \vDash n$, the length of $C_\lambda$ is equal to
${n-1 \choose 2} +k-1- \sum_{r=1}^{k} (r-1) \lambda_r$.
\end{Proposition}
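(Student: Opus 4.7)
By Theorems A and C, $C_\lambda$ is graded with the length function inherited from the Bruhat--Chevalley order on $S_n$. Hence the length of $C_\lambda$ equals $inv(\omega_\lambda)$, where $\omega_\lambda=\Omega(\pi_\lambda)$ is the maximum element given by (\ref{A:max element}). The plan is simply to write $\omega_\lambda$ down in one-line notation and count its inversions directly.

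First I would record the following structural description of $\omega_\lambda$, which is immediate from (\ref{A:max element}). Call the labels $1,2,\dots,k$ \emph{small} and $k+1,\dots,n$ \emph{large}. Concatenating the cycles $T_1T_2\cdots T_k$, the small label $s$ appears as the first entry of the $s$-th block, i.e.\ at position $p_s=1+\lambda_1+\cdots+\lambda_{s-1}$, while the large labels fill the remaining positions. Moreover, in $\omega_\lambda$ the small labels $1,2,\dots,k$ occur from left to right in strictly increasing order, whereas the large labels $k+1,\dots,n$ occur in strictly decreasing order (each cycle uses, in decreasing order, the largest unused labels above $k$).

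Next I would split $inv(\omega_\lambda)$ according to whether both entries of the pair are small, both are large, or one of each. The small--small class contributes $0$, the large--large class contributes exactly $\binom{n-k}{2}$, and for each small label $s$ the number of mixed inversions it creates equals the number of large labels preceding it in $\omega_\lambda$. Since $p_s-1$ positions precede $s$ and exactly $s-1$ of them carry small labels, this count equals $(p_s-1)-(s-1)=\sum_{j=1}^{s-1}(\lambda_j-1)$. Summing over $s$ and swapping the order of summation yields
$$
inv(\omega_\lambda)=\binom{n-k}{2}+\sum_{j=1}^{k}(k-j)(\lambda_j-1).
$$

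The only remaining task is to reconcile this expression with the claimed formula, which is a matter of elementary algebra: using $\sum_j\lambda_j=n$ one rewrites $\sum_{j=1}^{k}(k-j)(\lambda_j-1)=(k-1)n-\binom{k}{2}-\sum_{r=1}^{k}(r-1)\lambda_r$, and then a short calculation confirms the identity $\binom{n-k}{2}+(k-1)n-\binom{k}{2}=\binom{n-1}{2}+k-1$. The conceptual content of the proof is the structural reading of $\omega_\lambda$ as an interleaving of an increasing and a decreasing subword; the rest is straightforward bookkeeping, so no real obstacle is expected.
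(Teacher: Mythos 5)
Your proof is correct and rests on the same basic reduction as the paper's: by gradedness the length of $C_\lambda$ is $inv(\omega_\lambda)$, and one computes this from the explicit one-line form of the maximal element. The only real difference is how the inversion count is organized. The paper starts from $\sum_{i=1}^{n-2} i = \binom{n-1}{2}$, the inversion number of the single-cycle maximum $[1,n,n-1,\dots,2]$, and subtracts a correction $n-\sum_{r=1}^{j}\lambda_r-1$ for each of the $k-1$ breaks between consecutive cycles; you instead classify inversion pairs according to whether the entries are cycle-minima (your ``small'' labels $1,\dots,k$, which form an increasing subword) or not (the ``large'' labels $k+1,\dots,n$, which form a decreasing subword), getting $\binom{n-k}{2}+\sum_{j=1}^{k}(k-j)(\lambda_j-1)$. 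Both expressions are correct (they agree, e.g., giving $58$ for $\lambda=(4,2,3,5)$) and both reduce to the stated formula by the same kind of elementary algebra; your version makes the increasing/decreasing interleaving structure of $\omega_\lambda$ explicit, while the paper's is a more compact telescoping from the $\lambda=(n)$ case. No gaps.
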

\begin{proof}
It is enough to compute the length of the maximal element $\omega_\lambda$ in $C_\lambda$:
\begin{align*}
inv(\omega_\lambda) &= \sum_{i=1}^{n-2} i - \sum_{j=1}^{k-1} \left( n- \sum_{r=1}^j \lambda_r -1 \right) \\
&= {n-1 \choose 2} - (n-1)(k-1) + \sum_{r=1}^{k-1} (k-r) \lambda_r \\
&= {n-1 \choose 2} - (n-1)(k-1) + \sum_{r=1}^{k-1} k\lambda_r -\sum_{r=1}^{k-1} r\lambda_r \\
&= {n-1 \choose 2} - (n-1)(k-1) + \sum_{r=1}^{k} k\lambda_r -\sum_{r=1}^{k} r\lambda_r \\
&= {n-1 \choose 2} - (n-1)(k-1) + nk  -\sum_{r=1}^{k} (r-1) \lambda_r -n\\
&= {n-1 \choose 2} +k-1- \sum_{r=1}^{k} (r-1) \lambda_r.
\end{align*}
\end{proof}

\section{Unimodality and rank symmetry}\label{S:four}

Let $Comp(n)$ denote the set of all compositions of $n$. Define the operator $\mc{ST}: Comp(n) \rightarrow Comp(n)$ as follows:
For $\lambda =\left(\lambda_1,\dots,\lambda_k\right)\in Comp(n)$, $\mathcal{ST}(\lambda)$ is
the composition of $n$ obtained from $\lambda$ by splitting the part $\lambda_j$ into two parts $\lambda_j-1,1$,
where $\lambda_j$ is the rightmost part of $\lambda$ which is greater than $1$.
For example, $\mathcal{ST}(4,3)=(4,2,1)$ and $\mathcal{ST}(1,2,1,3,1,1)=(1,2,1,2,1,1,1)$.
Notice that for any composition $\lambda$ if we keep applying the operation $\mathcal{ST}$, then
eventually we arrive at the composition with all parts equal to 1.

Our first important observation regarding the relationship between $C_\lambda$ and $C_{\mc{ST}(\lambda)}$ is that
if $\lambda_k>1$, then $C_\lambda=C_{\mathcal{ST}(\lambda)}$. Indeed, this is easy to verify and does not need a proof.
If, on the contrary, $\lambda_k=1$, then there is a natural embedding of $C_{\mc{ST}(\lambda)}$ in $C_\lambda$.
Moreover, as we show below, $C_\lambda$ is a union of several copies of $C_{\mathcal{ST}(\lambda)}$ glued together in a certain way.
Also, as we are going to explain in the sequel, it follows from these observations that the poset $C_\lambda$ is
rank-symmetric and unimodal. In other words, if $r_i$ denotes the number of elements of length $i$ in $C_\lambda$ and $M$ denotes
the length of $\omega_\lambda$, the maximal element of $C_\lambda$, then
\begin{itemize}
\item $r_i = r_{M-i}$, for all $i=0,\dots, \lfloor M/2 \rfloor$,
\item $r_0 \leq r_1 \leq \cdots \leq r_{\lfloor M/2 \rfloor } \geq r_{\lfloor M/2 \rfloor +1 } \geq \cdots \geq r_M$.
\end{itemize}

Let us begin with exploring in detail the example of $C_{(4,2)}$, whose Hasse diagram is depicted in Figure~\ref{F}, below.
Our statements about this example are going to stay valid for a general composition $\lambda \vDash n$.
First of all, we know that $C_{(4,2)}=C_{(4,1,1)}$. By definition, $\mathcal{ST}(4,1,1)=(3,1,1,1)$. Notice from Figure~\ref{F} that
$C_{(4,1,1)}$ is a union of three copies of $C_{(3,1,1,1)}$, shifted by one-level each.
\begin{figure}[htp]
\begin{center}%centering % centering figure
\scalebox{0.7} % rescale the figure by a factor of 0.8
{\includegraphics[height=10in, width=10in]{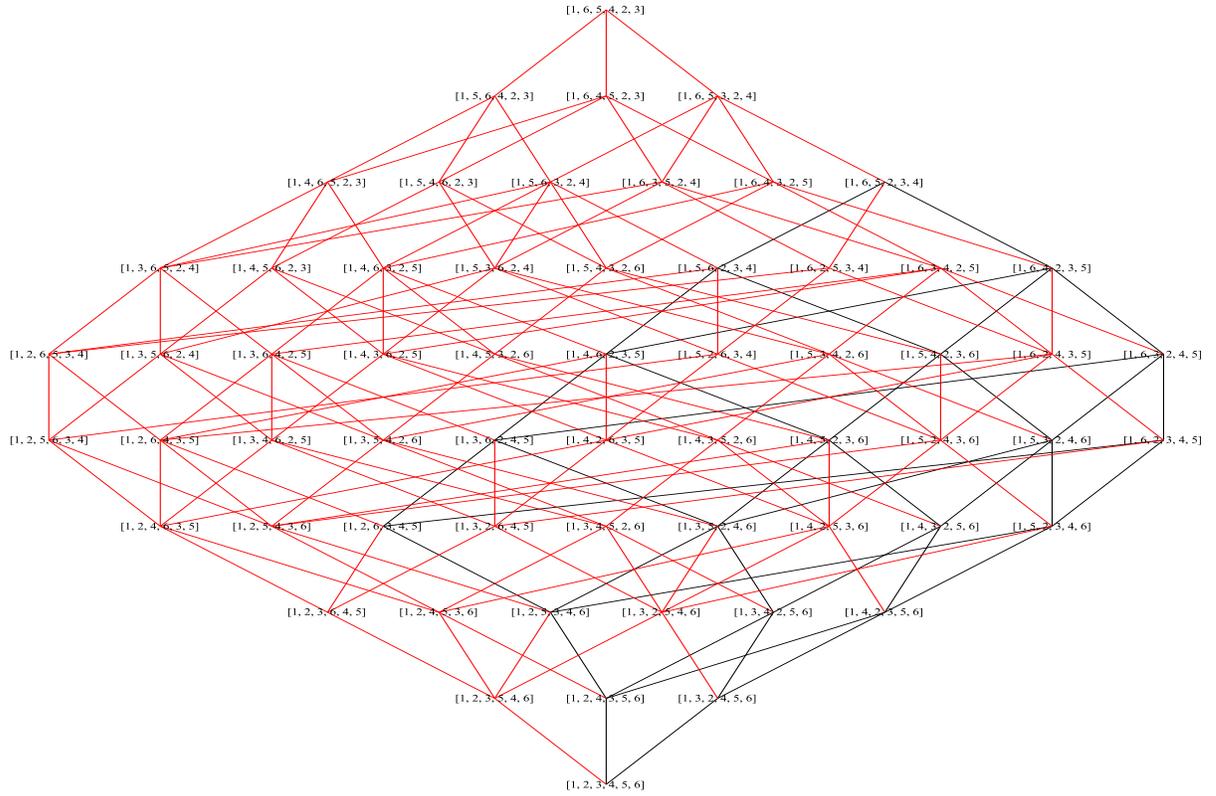}}
\caption{$C_{(3,1,1,1)}$ in $C_{(4,2)}=C_{(4,1,1)}$}
\label{F}
\end{center}
\end{figure}
Indeed, let us denote these posets by $C_{(3,1,1,1)}^{(0)}$, $C_{(3,1,1,1)}^{(1)}$, $C_{(3,1,1,1)}^{(2)}$.
Then the first copy $C_{(3,1,1,1)}^{(0)}$ is the most natural embedding of the poset $C_{(3,1,1,1)}$ into $C_{(4,1,1)}$;
its minimal element is the minimal element of $C_{(4,2)}$ (which is $id=[1,2,3,4,5,6]$), and its maximal element is $[1,6,5,2,3,4]$.
The Hasse diagram of $C_{(3,1,1,1)}^{(0)}$ in that of $C_{(4,1,1)}$ is depicted by using the black edges as in Figure~\ref{F}.

Next, we describe $C_{(3,1,1,1)}^{(1)}$. Notice that for any $\pi^{(0)}\in C_{(3,1,1,1)}^{(0)} = C_{(3,1,1,1)}$
the fourth entry of $\pi^{(0)}$ is less than its fifth entry (in one-line notation, of course).
Set $\pi^{(1)} :=[1,\pi^{(0)}_2,\pi^{(0)}_3,\pi^{(0)}_5,\pi^{(0)}_4,\pi^{(0)}_6]$ and observe that $\pi^{(1)}$ lies in $C_{(4,1,1)}$.
Observe also that $\pi^{(1)}$ is uniquely determined by $\pi^{(0)}$.
For example, check that $[1,6,5,2,3,4]^{(1)}=[1,6,5,3,2,4]$.
Following this scheme, we set $C_{(3,1,1,1)}^{(1)} := \{ \pi^{(1)} \in C_{(4,1,1)}:\ \pi^{(0)} \in C_{(3,1,1,1)}^{(0)}\}$.
It is obvious that for every $\pi^{(0)}\in C_{(3,1,1,1)}^{(0)}$ we have $inv(\pi^{(1)})=inv(\pi^{(0)})+1$,
and moreover, $\pi \leq \tau$ in $C_{(3,1,1,1)}^{(0)}$ if and only if $\pi^{(1)} \leq \tau^{(1)}$ in $C_{(3,1,1,1)}^{(1)}$.
The poset $C_{(3,1,1,1)}^{(2)}$ is defined similarly. We observe that for each $\pi^{(1)}\in C_{(3,1,1,1)}^{(1)}$,
$\pi^{(1)}_4<\pi^{(1)}_6$. Thus, we interchange the fourth and the sixth entries to obtain
$$
\pi^{(2)}:=[1,\pi^{(1)}_2,\pi^{(1)}_3,\pi^{(1)}_6,\pi^{(1)}_5,\pi^{(1)}_4]=[1,\pi^{(0)}_2,\pi^{(0)}_3,\pi^{(0)}_6,\pi^{(0)}_4,\pi^{(0)}_5]\in C_{(3,1,1,1)}^{(2)}.
$$
It is easy to verify that $inv(\pi^{(2)})=inv(\pi^{(1)})+1=inv(\pi^{(0)})+2$ and $\pi^{(1)} \leq \tau^{(1)}$ in $C_{(3,1,1,1)}^{(1)}$ if and only if
$\pi^{(2)} \leq \tau^{(2)}$ in $C_{(3,1,1,1)}^{(2)}$.
Finally, we observe that $C_{(3,1,1,1)}^{(0)}$, $C_{(3,1,1,1)}^{(1)}$, $C_{(3,1,1,1)}^{(2)}$
are pairwise disjoint and their union is equal to $C_{(4,1,1)}$.

Next, we analyze $C_{(3,1,1,1)}$ by iterating the above scheme.
Notice that $\mathcal{ST}(3,1,1,1)=(2,1,1,1,1)$, and that
$C_{(2,1,1,1,1)}$ is the chain
$$
[1,2,3,4,5,6]<[1,3,2,4,5,6]<[1,4,2,3,5,6]<[1,5,2,3,4,6]<[1,6,2,3,4,5].
$$
It turns out that the poset $C_{(3,1,1,1)}^{(0)}=C_{(3,1,1,1)}$ consists of four isomorphic copies
of $C_{(2,1,1,1,1)}$. Indeed, the poset $C_{(2,1,1,1,1)}^{(0)}$ is exactly $C_{(2,1,1,1,1)}$, and
the poset $C_{(2,1,1,1,1)}^{(1)}$ is equal to
$$
[1,2,4,3,5,6]<[1,3,4,2,5,6]<[1,4,3,2,5,6]<[1,5,3,2,4,6]<[1,6,3,2,4,5].
$$
The poset $C_{(2,1,1,1,1)}^{(2)}$ is
$$
[1,2,5,3,4,6]<[1,3,5,2,4,6]<[1,4,5,2,3,6]<[1,5,4,2,3,6]<[1,6,4,2,3,5],
$$
and finally, the poset $C_{(2,1,1,1,1)}^{(3)}$ is the chain
$$
[1,2,6,3,4,5]<[1,3,6,2,4,5]<[1,4,6,2,3,5]<[1,5,6,2,3,4]<[1,6,5,2,3,4].
$$
The union of these four chains form the poset $C_{(3,1,1,1)}^{(0)}$.
Of course, both of the posets $C_{(3,1,1,1)}^{(1)}$, and $C_{(3,1,1,1)}^{(2)}$ have similar decompositions into
corresponding chains.

Finally, we look at situation $\mathcal{ST}(2,1,1,1,1)=(1,1,1,1,1,1)$.
The poset $C_{(1,1,1,1,1,1)}$ is a singleton, and $C_{(2,1,1,1,1)}$ is the chain of five elements.
Thus, $C_{(2,1,1,1,1)}$ consists of five copies $C_{(1,1,1,1,1,1)}^{(0)}$,\dots,$C_{(1,1,1,1,1,1)}^{(4)}$ of $C_{(1,1,1,1,1,1)}$.

Before we explain how the above described decompositions imply the unimodality and the rank-symmetry of $C_{(4,1,1)}$,
let us point out that although $C_{(4,1,1)}$ is a disjoint union of three shifted copies of $C_{(3,1,1,1)}$, the poset $C_{(4,1,1)}$
is not isomorphic to a Cartesian product of a chain with 3 elements and the subposet $C_{(3,1,1,1)}$.

The five one-element posets $C_{(1,1,1,1,1)}^{(i)}$, $i=0,\dots,4$ form the chain $C_{(2,1,1,1,1)}$.
Four copies of the poset $C_{(2,1,1,1,1)}$ form $C_{(3,1,1,1)}$. Finally, there are three copies of $C_{(3,1,1,1)}$ forming $C_{(4,1,1)}$.

\begin{comment}

The zero level in $C_{(3,1,1,1)}$ has, of course, one element, which is the element of length 0 in $C_{(2,1,1,1,1)}^{(0)}$.
This element is nothing but the identity.

The level 1 of the poset $C_{(3,1,1,1)}$ has two elements: one of them is the element of length 1 of the poset  $C_{(2,1,1,1,1)}^{(0)}$
which is $[1,3,2,4,5,6]$, and the other one is the element of length 0 from the poset $C_{(2,1,1,1,1)}^{(1)}$ which is $[1,2,4,3,5,6]$.
The level 2 of the poset $C_{(3,1,1,1)}$ has three elements:
one of them is the element of length 2 of the poset $C_{(2,1,1,1,1)}^{(0)}$ which is $[1,4,2,3,5,6]$,
another one is the element of length 1 of the poset $C_{(2,1,1,1,1)}^{(1)}$ which is $[1,4,2,3,5,6]$
and the third one is the element of length 0 of the poset $C_{(2,1,1,1,1)}^{(2)}$ which is $[1,2,5,3,4,6]$, and so on.

\end{comment}

We paraphrase pictorially starting from $C_{(2,1,1,1,1)}$:
 $$
\begin{matrix}
1 &\, &\, &\, &\,\\
\, &1 &\, &\, &\,\\
\, &\, &1 &\, &\,\\
\, &\, &\, &1 &\,\\
\, &\, &\, &\, &1\\
1 &1   &1  &1  &1
\end{matrix}
$$
Here, we read the rows of the array starting from top towards bottom. Each of the first five rows corresponds to a copy of $C_{(1,1,1,1,1,1)}$.
Each of them has the unique entry 1 since these come from one-element posets.
Each next row is shifted one unit towards right because each copy of $C_{(1,1,1,1,1,1)}$ is
placed one level above the previous copy in $C_{(2,1,1,1,1)}$.
The sixth row is the sequence of numbers of elements at each level of $C_{(2,1,1,1,1)}$.

We repeat the same process for the four copies of $C_{(2,1,1,1,1)}$ in $C_{(3,1,1,1)}$:
$$
\begin{matrix}
1 &1 &1 &1 &1 &\, &\, &\,\\
\,&1 &1 &1 &1 &1 &\, &\, \\
\, &\, &1  &1 &1 &1  &1 &\,\\
\, &\, &\, &1  &1 &1 &1  &1\\
1 &2 &3 &4 &4 &3 &2 &1
\end{matrix}
$$
The top first four rows are sequences of numbers of elements at each level of $C_{(2,1,1,1,1)}$.
There is a shifting because each next copy starts one level above the previous copy of $C_{(2,1,1,1,1)}$ in $C_{(3,1,1,1)}$.
The last row is the sequence of the cardinalities of the levels of $C_{(3,1,1,1)}$.
Clearly, each entry of the sequence is the sum of the terms lying above in its column.

In the same way we display what happens when three copies of the poset $C_{(3,1,1,1)}$ form the poset $C_{(4,1,1)}$:
$$
\begin{matrix}
1 &2 &3 &4 &4 &3 &2 &1 &\, &\,\\
\,&1 &2 &3 &4 &4 &3 &2 &1  &\,\\
\,&\,&1 &2 &3 &4 &4 &3 &2  &1 \\
1 &3 &6 &9 &11 &11 &9 &6 &3 &1
\end{matrix}
$$

More generally, given $a_1$, $a_2$,\dots, $a_{t-1}$, $a_t$, $a_{t-1}$,\dots, $a_2$, $a_1$, a symmetric and unimodal sequence,
the column sums of the diagram
$$
\begin{matrix}
a_1 & a_2 & \cdots  & a_{t}   & \cdots  & a_2   & a_1   & & &  \\
\, &a_1  &a_2    &\cdots   &a_{t}    &\cdots  &a_2    &a_1   & & \\
\, &\,   &a_1     &a_2    &\cdots   &a_{t}     &\cdots &a_2    &a_1 & \\
\, &\,   &\,     &\ddots  &\ddots  &\ddots  &\ddots  &\ddots   &\ddots &  \ddots  %&\codts    &\,
\end{matrix}
$$
give the sequence $a_1$, $a_1+a_2$, $a_1+a_2+a_3$,\dots, $a_1+a_2+a_3$, $a_1+a_2$, $a_1$, which is obviously
symmetric and unimodal, also.

Using these observations we easily compute $\sum_{\pi\in C_{(4,1,1)}}q^{inv(\pi)}$:
The length-generating function of the chain of five elements $C_{(2,1,1,1,1)}$ is just $1+q+q^2+q^3+q^4=[5]_q$.
Four copies of $C_{(2,1,1,1,1)}$ form $C_{(3,1,1,1)}$ in the way that we described, so the length-generating function of
$C_{(3,1,1,1)}$ is $[4]_q[5]_q$, which is equal to $1+2q+3q^2+4q^3+4q^4+3q^5+2q^6+q^7$.
Three copies of $C_{(3,1,1,1)}$ form $C_{(4,1,1)}$ in the way that we described, so the length-generating function of $C_{(4,1,1)}$ is
$$
\sum_{\pi\in C_{(4,1,1)}}q^{inv(\pi)}=[3]_q[4]_q[5]_q=1+3q+6q^2+9q^3+11q^4+11q^5+9q^6+6q^7+3q^8+q^9\,.
$$

We are ready to deal with the general case.
\begin{Lemma}
Let $\lambda=\left(\lambda_1,\dots,\lambda_k\right) \vDash n$ be a composition of $n$,
and let $\lambda_j$ be the rightmost part of $\lambda$ which is greater than $1$. Then
$$
C_\lambda=\bigcup_{i=0}^{k-j}C_{\mathcal{ST}(\lambda)}^{(i)},
$$
where $C_{\mathcal{ST}(\lambda)}^{(i)} \subseteq C_\lambda$, $i=1,\dots, k-j$  is a subposet isomorphic to $C_{\mathcal{ST}(\lambda)}$,
and defined as follows:
$C_{\mathcal{ST}(\lambda)}^{(0)}$ is just the natural embedding of $C_{\mathcal{ST}(\lambda)}$ into $C_\lambda$.
To build $C_{\mathcal{ST}(\lambda)}^{(1)}$ from $C_{\mathcal{ST}(\lambda)}^{(0)}$,
pick an element $\pi^{(0)}\in C_{\mathcal{ST}(\lambda)}^{(0)}$ and notice that $\pi^{(0)}_t<\pi^{(0)}_{t+1}$,
where $t=\lambda_1+\lambda_2+\cdots +\lambda_j$. Define $\pi_{(1)}\in C_{\mathcal{ST}(\lambda)}^{(1)}$
by interchanging $\pi^{(0)}_t$ and $\pi^{(0)}_{t+1}$ in the one-line notation of $\pi^{(0)}$, i.e.,
$\pi^{(1)}=[1,\pi^{(0)}_2,\dots,\pi^{(0)}_{t-1},\pi^{(0)}_{t+1},\pi^{(0)}_{t},\dots,\pi^{(0)}_{n}]$.
To build $C_{\mathcal{ST}(\lambda)}^{(2)}$ from $C_{\mathcal{ST}(\lambda)}^{(1)}$, we start with an
element $\pi^{(1)}$ of $C_{\mathcal{ST}(\lambda)}^{(1)}$. Then
the permutation $\pi^{(2)}$ is defined by interchanging $t$-th and $(t+2)$-th entries in the one-line notation of $\pi^{(1)}$, and so on.
\end{Lemma}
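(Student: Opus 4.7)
The plan is to prove the lemma in four steps: verify that each $\pi^{(i)}$ lies in $C_\lambda$, check the rank-shifting property, establish that the sets $C_{\mathcal{ST}(\lambda)}^{(i)}$ cover $C_\lambda$ disjointly, and finally argue the poset isomorphism. For the first step I will exhibit an explicit standard cyclic form for $\pi^{(i)}$ of composition type $\lambda$. Setting $K = k-j+1$ and writing $\pi^{(0)} = \Omega(T_1 \cdots T_{j-1}\, T_j'\, (c_1) \cdots (c_K))$ with $|T_j'| = \lambda_j - 1$ and $c_1 < c_2 < \cdots < c_K$, I form
$$
\pi^{(i)} = \Omega\bigl(T_1 \cdots T_{j-1}\, T_j''\, (c_1) \cdots (c_i)(c_{i+2}) \cdots (c_K)\bigr),
$$
where $T_j''$ is obtained by appending $c_{i+1}$ to $T_j'$. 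This is a valid standard cyclic form of type $\lambda$ because $\min(T_j'') = \min(T_j') < c_1$ and the trailing $1$-cycle minima are $c_1 < \cdots < c_i < c_{i+2} < \cdots < c_K$; comparing one-line notations recovers the iterative-swap description given in the statement. The chain $\pi^{(0)} \lessdot \pi^{(1)} \lessdot \cdots \lessdot \pi^{(i)}$ consists of genuine Bruhat covers in $S_n$, since the $s$-th swap (positions $t$ and $t+s$, values $c_s < c_{s+1}$) is a cover because the only intermediate entries are $c_1,\dots, c_{s-1}$, all smaller than $c_s$. In particular $inv(\pi^{(i)}) = inv(\pi^{(0)}) + i$.

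For disjointness and exhaustiveness, note that for any $\pi \in C_\lambda$ the positions $t+1, \dots, n$ hold the minima of the trailing $1$-cycles of $\pi$'s standard form, which appear in strict increasing order $d_1 < d_2 < \cdots < d_{K-1}$. Since $e = \pi(t)$ is distinct from every $d_s$, there is a unique $i$ with $d_i < e < d_{i+1}$ (with $d_0 = -\infty$, $d_K = +\infty$), and sorting the values at positions $t, \dots, n$ into increasing order recovers the unique $\pi^{(0)} \in C_{\mathcal{ST}(\lambda)}$ whose lemma-image at index $i$ is $\pi$.

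The main technical point is to prove $\pi^{(0)} \leq \tau^{(0)}$ iff $\pi^{(i)} \leq \tau^{(i)}$ in the Bruhat order. I apply Lemma~\ref{L3} and compare the conditions $\widetilde{\pi^{(0)}[k]} \leq_c \widetilde{\tau^{(0)}[k]}$ and $\widetilde{\pi^{(i)}[k]} \leq_c \widetilde{\tau^{(i)}[k]}$ level by level. For $k < t$ the first $t-1$ entries of $\pi^{(0)}$ and $\pi^{(i)}$ coincide, while for $k \geq t+i$ the underlying multisets of $\pi^{(0)}[k]$ and $\pi^{(i)}[k]$ already agree (both equal $M^\pi \cup \{c_1^\pi, \dots, c_{k-t+1}^\pi\}$), so these levels produce identical conditions on the two sides. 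The delicate range is $t \leq k < t+i$, where the multisets genuinely differ. Here the key observation is that the level-$(t-1)$ condition $\widetilde{M^\pi} \leq_c \widetilde{M^\tau}$, with $M^\pi = [n] \setminus \{c_1^\pi, \dots, c_K^\pi\}$, is equivalent (via the ``$s$-th largest element of the removed set'' reformulation of $\leq_c$) to the pointwise domination $c_\ell^\pi \geq c_\ell^\tau$ for all $\ell = 1, \dots, K$; applying the same reformulation at each intermediate level shows that both the $\pi^{(0)}$- and $\pi^{(i)}$-conditions appearing there reduce to sub-collections of these pointwise inequalities. Consequently $\pi^{(0)} \leq \tau^{(0)}$ and $\pi^{(i)} \leq \tau^{(i)}$ are both equivalent to the same overall conditions: the level-$k$ constraints for $k < t-1$ (identical on both sides) together with $c_\ell^\pi \geq c_\ell^\tau$ for all $\ell$. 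The main obstacle will be the bookkeeping needed to identify the sorted order of the removed set $\{c^\pi_{l+1}, \dots, c^\pi_i\} \cup \{c^\pi_{i+2}, \dots, c^\pi_K\}$ (with $l = k - t$) and to recognize that the seemingly distinct intermediate-level conditions are subsumed by the pointwise domination coming from level $t-1$.
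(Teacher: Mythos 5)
Your proof is correct, and its core set-theoretic step is the same as the paper's: the paper classifies each $\pi\in C_\lambda$ by the number of inversions in its right tail of length $m(\lambda)+1$, which is exactly your observation that $\pi(t)$ sits in a unique gap $d_i<\pi(t)<d_{i+1}$ among the increasing tail entries, and that sorting the tail recovers a unique preimage in $C_{\mathcal{ST}(\lambda)}$. Where you genuinely diverge is in scope: the paper's proof establishes only the disjoint set-theoretic union and treats the remaining assertions of the lemma (that each $C_{\mathcal{ST}(\lambda)}^{(i)}$ lies in $C_\lambda$, that $inv$ shifts by exactly $i$, and that the induced order on $C_{\mathcal{ST}(\lambda)}^{(i)}$ is isomorphic to that of $C_{\mathcal{ST}(\lambda)}$) as ``obvious'' or verifies them only on the worked example $C_{(4,1,1)}$. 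You supply all three: the explicit standard cyclic form $T_1\cdots T_{j-1}T_j''(c_1)\cdots(c_i)(c_{i+2})\cdots(c_K)$ cleanly certifies membership and composition type; the observation that each successive swap passes only over the smaller values $c_1,\dots,c_{s-1}$ certifies the Bruhat covers; and the complementation reformulation of $\leq_c$ (the $\ell$-th smallest removed value must weakly dominate) correctly reduces every truncation level $k\geq t-1$, for both $\pi^{(0)}$ and $\pi^{(i)}$, to sub-collections of the pointwise inequalities $c_\ell^\pi\geq c_\ell^\tau$ already forced at level $t-1$, which is precisely what makes the order isomorphism work. Your version is therefore more complete than the paper's; the only cost is the index bookkeeping in the range $t\leq k<t+i$, which you have handled correctly.
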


\begin{proof}
The inclusion $C_\lambda\supseteq\bigcup_{i=0}^{k-j}C_{\mathcal{ST}(\lambda)}^{(i)}$ is
obvious since each permutation in each $C_{\mathcal{ST}(\lambda)}^{(i)}$  belongs to $C_\lambda$ by definition.
Thus, it suffices to prove the inclusion $C_\lambda\subseteq\bigcup_{i=0}^{k-j}C_{\mathcal{ST}(\lambda)}^{(i)}$.

Let $m(\lambda)$ be the number of parts of size $1$ in the right tail of the composition $\lambda$.
For example, if $\lambda=(1,5,1,3,1,1,1,1)$, then $m(\lambda)=4$.
Then by the definition of $C_\lambda$ the numbers in rightmost $m(\lambda)$ positions of every permutation of $C_\lambda$ increase.
It is obvious that for any permutation of $C_{\mathcal{ST}(\lambda)}$ the numbers in $m(\lambda)+1$ rightmost positions increase.
Take a permutation $\pi\in C_\lambda$ and let $i$ be the number of inversions of the right tail of length $m(\lambda)+1$ of the permutation $\pi$.
Then by definition $\pi\in C_{\mathcal{ST}(\lambda)}^{(i)}$. This completes the proof of our claim.

To illustrate we look at our earlier example once more and show that
each $\pi\in C_{(4,1,1)}$ belongs to exactly one of the sets $C_{(3,1,1,1)}^{(0)}$, $C_{(3,1,1,1)}^{(1)}$, $C_{(3,1,1,1)}^{(2)}$.
For instance, take $\pi=[1,3,6,4,2,5]\in C_{(4,1,1)}$. We have $m(4,1,1)+1=3$. The right tail of $\pi$ of length $3$ is $[4,2,5]$.
There is only one inversion in $[4,2,5]$, and therefore, $\pi\in C_{(3,1,1,1)}^{(1)}$.
%The inclusion $C_\lambda\subseteq\bigcup_{i=0}^{k-j}C_{\mathcal{ST}(\lambda)}^{(i)}$ is proved and thus the proof of Lemma is completed.
\end{proof}

We are ready to prove

\vspace{.25cm}
\noindent
\textbf{Theorems B.}
Let $\lambda=\left(\lambda_1,\dots,\lambda_k\right)$ be a composition of $n$.
The poset $C_\lambda$ is unimodal and rank-symmetric and moreover its length-generating function is equal to
$$
\sum_{\pi\in C_\lambda}q^{inv(\pi)}=[i_1]_q[i_2]_q\cdots[i_m]_q
$$
for a certain sequence $2\leqslant i_1<i_2<\cdots < i_m\leqslant n-1$.

\begin{proof}
%Assume that $\lambda_1>1$, i.e., $\lambda$ does not have a left tail of parts of size $1$.
The zero level of $C_\lambda$ consists only of $id$ which is the zero-level entry of $C_{\mathcal{ST}(\lambda)}^{(0)}$.
The level $1$ of $C_\lambda$ consists of all level $1$ elements of $C_{\mathcal{ST}(\lambda)}^{(0)}$
together with the zero-level element of $C_{\mathcal{ST}(\lambda)}^{(1)}$.

The level 2 of $C_\lambda$ consists of all level 2 elements of $C_{\mathcal{ST}(\lambda)}^{(0)}$
together with all level 1 elements of $C_{\mathcal{ST}(\lambda)}^{(1)}$ and the zero-level element of
$C_{\mathcal{ST}(\lambda)}^{(2)}$, and so on. Let $f(q)$ be the length generating function of
$C_{\mathcal{ST}(\lambda)}$. Let $\lambda_j$ be the rightmost part of $\lambda$
which is greater than $1$, so $C_\lambda$ is the union of $C_{\mathcal{ST}(\lambda)}^{(0)}$,
$C_{\mathcal{ST}(\lambda)}^{(1)}$, \dots , $C_{\mathcal{ST}(\lambda)}^{(k-j)}$. We immediately have
$$
\sum_{\pi\in C_\lambda}q^{inv(\pi)}=[k-j+1]_qf(q)\,.
$$
Continuing with applying the $\mathcal{ST}$ operation, we get at some step
$\mathcal{ST}(\mathcal{ST}(\cdots(\mathcal{ST}(\lambda))))=(2,1,1,\dots,1)$.
The poset $C_{(2,1,1,\dots,1)}$ is just the chain of $n-1$ elements and its length generating function is $[n-1]_q$.
So the length-generating function is $[k-j+1]_q[i_2]_q\cdots[n-1]_q$ as claimed.
It is well known that such a polynomial is palindromic and so the poset is unimodal and rank-symmetric.

%In the beginning we assumed that $\lambda_1>1$. We don't lose the generality by this assumption because
%if the composition starts with $m$  parts of size $1$, all the permutations of $C_\lambda$ start with
%$[1,2,\dots,m,\dots.]$ by the definition of $C_\lambda$, so this left tail does not affect anything and the above argument works properly.
\end{proof}

\section{The order complex of $C_\lambda$}\label{S:five}

The {\em M\"obius function} of a poset $P$ is defined recursively by the formula
\begin{align*}
\mu ([x,x]) &= 1 \\
\mu ([x,y]) &= - \sum_{x \leq z < y} \mu([x,z])
\end{align*}
for all $x \leq y $ in $P$.

Let $\hat{0}$ and $\hat{1}$ denote the smallest and the largest elements of $P$, respectively.
It is well known that $\mu(\hat{0},\hat{1})$ is equal to the ``reduced Euler characteristic'' $\widetilde{\chi}(\Delta(P))$
of the topological realization of the order complex of $P$. See Proposition 3.8.6 in \cite{EC1}.
%This fact provides plenty of motivation to compute the M\"obius functions, in general.

Let $\varGamma$ denote a finite, totally ordered poset and let $g$ be a $\varGamma$-valued
function defined on $C(P)$. Then $g$ is called an {\em $R$-labeling}
for $P$, if for every interval $[x,y]$ in $P$, there exists a unique saturated chain
$x=x_1 \leftarrow x_2 \leftarrow \cdots \leftarrow x_{n-1} \leftarrow x_n = y$ such that
\begin{align}\label{R-label}
g(x_1,x_2) \leq g(x_2,x_3) \leq \cdots \leq g(x_{n-1},x_n).
\end{align}
Thus, $P$ is EL-shellable, if it has an $R$-labeling $g:C(P)\rightarrow \varGamma$ such that
for each interval $[x,y]$ in $P$ the sequence (\ref{R-label}) is lexicographically smallest among all sequences of the form
$$
(g(x,x_2'),g(x_2',x_3'), \dots , g(x_{k-1}',y)),
$$
where $x \leftarrow x_2 \leftarrow ' \cdots \leftarrow x_{k-1}' \leftarrow y$.

Suppose $P$ is of length $n\in \N$ with the length function $\ell = \ell_P: P\rightarrow \N$.
For $S\subseteq [n]$, let $P_S$ denote the subset $P_S = \{ x\in P:\ \ell(x) \in S\}$.
We denote by $\mu_S$ the M\"obius function of the poset $\hat{P}_S$ that is obtained from $P_S$ by
adjoining a smallest and a largest element, if they are missing.
Suppose also that $g:C(P)\rightarrow \varGamma$ is an $R$-labeling for $P$. In this case
it is well known that $(-1)^{|S|-1} \mu_S (\hat{0}_{\hat{P}_S},\hat{1}_{\hat{P}_S})$ is equal to the number of maximal chains
$x_0=\hat{0} \leftarrow x_1 \leftarrow \cdots \leftarrow x_n=\hat{1}$ in $P$ for which the descent set of the sequence
$(g(x_0,x_1),\dots,g(x_{n-1},x_n))$ is equal to $S$, or equivalently,
$\{ i \in [n]:\ g(x_{i-1},x_i) \geq g(x_{i+1},x_i) \} = S$. See Theorem 3.14.2 in \cite{EC1}.

\vspace{.25cm}
\noindent
\textbf{Theorems D.}
Let $\lambda=\left(\lambda_1,\dots,\lambda_k\right)$ be a composition of $n$.
\begin{enumerate}
\item If $\lambda=(n)$, or $\lambda=(n-1,1)$, then $C_\lambda$ is isomorphic to the Bruhat-Chevalley poset on $S_{n-1}$,
embedded into $S_n$ as the set of permutations fixing $1$.
Similarly, if $\lambda_1=\lambda_2=\cdots=\lambda_{k-1}=1$ and $\lambda_k=n-k+1$ or $\lambda_1=\lambda_2=\cdots=\lambda_{k-2}=1$,
$\lambda_{k-1}=n-k$, $\lambda_k=1$ (like $(1,1,1,4,1)$), then $C_\lambda$ is isomorphic to the Bruhat-Chevalley poset $S_{n-k}$
suitably embedded into $S_n$.
\item In all other cases the order complex of $C_\lambda$ triangulates a ball.
\end{enumerate}

\begin{proof}
The first statement is obvious.

Assume that $\lambda \vDash n$ is a composition different than those considered in the first part, and assume also that $\lambda_1>1$.
As before, denote by $\omega_\lambda$ the maximal element of $C_\lambda$.
We know from Section \ref{S:three} that the poset $C_\lambda$ is EL-shellable.
In order to show that its order complex triangulates a ball, by the discussion
at the beginning of this section it is enough to show that there does not exist a decreasing chain going from
$id$ to $\omega_\lambda$ so that $\mu_{C_\lambda}([id,\omega_\lambda])=0$.
We prove this by contradiction.

 \begin{comment}
 It is well known that the strong Bruhat poset of $S_n$ is self-dual, i.e., is isomorphic to its dual (opposite) poset.
 Thus the dual Buhat order of $S_n$ is EL-shellable and therefore there exists the unique increasing
 (in the dual order) chain going from $\omega(\lambda)$ to $id$ and this chain is lexicographically smallest.
 We use here the same EL-labeling as in Theorem~\ref{ELS}. Increasing in the dual order means decreasing
 in the original order, so in the Bruhat poset of $S_n$ there exists a unique decreasing chain going from $id$ to $\omega(\lambda)$.
 Let show that at least one of the elements of this chain does not belong to $C_\lambda$,
 which will imply that there is no decreasing chain going from $id$ to $\omega(\lambda)$ in $C_\lambda$.
\end{comment}

Assume contrary that there exists an increasing increasing chain starting at the maximal element $\omega_\lambda=[1,n,\dots,2,\dots]$
and going down to the minimum $id=[1,2,\dots,n]$. (Thus, it is a decreasing chain from bottom to top.)
Note that the number $2$ is at the second entry in $id$, and since by our hypothesis $\lambda_1>1$, in $\omega_\lambda$
the number $2$ is at the position $\lambda_1+1$ (which is greater than 2, of course).
As we go downwards in our increasing chain, in order to be able to move 2 to the second position
we need, at some step, to use an edge-label of the form $(2,*)$. Recall that these edge-labels means multiplying
from the right by the corresponding reflection. Since our chain is consists of covering relations,
as we move from top to bottom the inversion number of the permutation must decrease by one.
It follows that our labeling should start with with the label $(2,3)$, then use $(2,4)$, until we use $(2,\lambda_1+1)$,
which puts the number $2$ into the second position.
Notice that since we assumed that $\lambda$ is not of the form described in the first statement of our theorem,
the number $3$ must be in the position $\lambda_1+\lambda_2+1$ which is to the right of the number $2$ in
$\omega_\lambda=[1,n,\dots,2,\dots,3,\dots]$.
But this means that the permutation $\sigma=\omega_\lambda\cdot(2,3)\cdots(2,\lambda_1+1)\notin C_\lambda$. Indeed,
the transpositions we use up to this point does not change the position of the number $3$, for
they move only those numbers that are initially at the positions from the second to the $(\lambda_1+1)$-th, and
for $\sigma \in C_\lambda$ the number $3$ appear at the position $\lambda_1+1$ or smaller.
This completes the proof that there is no decreasing chain from the minimum to maximum in $C_\lambda$ when $\lambda_1>1$.
If $\lambda \vDash n$ has $m$ parts of size $1$ in its beginning, then we argue similarly,
by using the number $m+2$ instead of $2$, and $m+3$ instead of $3$.
This completes our proof.
\end{proof}

\section{Conclusion}\label{S:six}

This work can be continued in several directions.
For example, one can investigate the poset $\Omega(Conj(\nu))$, where $Conj(\nu)$ is the conjugacy class of the
symmetric group which corresponds to the partition $\nu$.
Obviously, $\Omega$ is not necessarily a one-to-one mapping when we consider it on $Conj(\nu)$ and
$\Omega(Conj(\nu))=\bigcup_{\lambda}C_\lambda$, where $\lambda$ runs over all compositions obtained
by permuting the parts of the partition $\nu$.
The computer calculations performed for few examples suggest that such posets are bounded, graded and unimodal.
However, they are not necessarily rank-symmetric.
The natural question is to whether they are EL-shellable, and determining their order complexes.

Another possibly interesting direction is to understand whether the posets presented here are related in some nice combinatorial way to representations of $S_n$ or of the type A 0-Hecke algebra along the lines of~\cite{M}.
\begin{comment} 
 The number of compositions of $n$ is $2^{n-1}$
which is also the number of irreducible representations of the 0-Hecke algebra of $S_n$.
It would be interesting to understand whether the posets $C_\lambda$ have any relations to the representation theory of the
0-Hecke algebra.
\end{comment}

\end{document}